\documentclass[reqno]{amsart}%
\usepackage{amssymb,mathtools,enumerate}

\usepackage{ifpdf}
\ifpdf
 \usepackage[hyperindex]{hyperref}
\else
 \expandafter\ifx\csname dvipdfm\endcsname\relax
 \usepackage[hypertex,hyperindex]{hyperref}%
 \else
 \usepackage[dvipdfm,hyperindex]{hyperref}%
 \fi
\fi

\allowdisplaybreaks[4]

\theoremstyle{plain}
\newtheorem{thm}{Theorem}

\newtheorem{conj}{Conjecture}

\newtheorem{lem}{Lemma}

\theoremstyle{remark}
\newtheorem{rem}{Remark}

\DeclareMathOperator{\td}{d\!}
\DeclareMathOperator{\te}{e}

\begin{document}

\title[Sectional curvature of Fisher--Rao manifold]
{The decrease in sectional curvature of the Fisher--Rao manifold of beta distributions}

\author[F. Qi]{Feng Qi}
\address{School of Mathematics and Physics, Hulunbuir University, Hulunbuir 021008, Inner Mongolia, China;
17709 Sabal Court, University Village, Dallas, TX 75252-8024, USA}
\email{\href{mailto: F. Qi <qifeng618@gmail.com>}{qifeng618@gmail.com}}
\urladdr{\url{https://orcid.org/0000-0001-6239-2968}}

\begin{abstract}
In the paper, the author develops a general convexity lemma and applies it to demonstrate the monotonic decrease of a function involving polygamma functions, arising from the sectional curvature of the Fisher--Rao manifold of beta distributions.
\end{abstract}

\keywords{sectional curvature, decrease, lower bound, Fisher--Rao manifold, beta distribution, polygamma function}

\subjclass[2010]{Primary 53C25; Secondary 26A48, 26A51, 33B15, 60E05}

\thanks{This paper was typeset using \AmS-\LaTeX}

\maketitle

\section{Introduction}
The Fisher--Rao manifold of beta distributions is the parameter space of the beta family endowed with the Fisher--Rao metric. This geometric structure arises in information geometry, a growing field that examines statistical models through the lens of differential geometry. The resulting manifold provides insight into the intrinsic geometry of beta distributions and supports deeper analysis of their statistical and inferential properties.
\par
Let $M=\{(x,y): x,y>0\}$ denote the first quadrant on $\mathbb{R}^2$. Let
\begin{equation*}
\td s^2=\psi'(x)\td x^2+\psi'(y)\td y^2-\psi'(x+y)(\td x+\td y)^2
\end{equation*}
be the Fisher metric and $M$ be equipped with $\td s^2$, where, for $\Re(z)>0$,
\begin{equation*}
\psi(z)=[\ln\Gamma(z)]'=\frac{\Gamma'(z)}{\Gamma(z)}\quad\text{and}\quad
\Gamma(z)=\int_0^{\infty}t^{z-1}\te^{-t}\td t.
\end{equation*}
\par
In~\cite[Proposition~3]{brigant2019fisherrao}, \cite[Proposition~14]{brigant2020fisherraoV1}, \cite[Proposition~15]{brigant2020fisherrao}, and~\cite[Proposition~15]{brigant2021fisherrao}, the authors computed the sectional curvature $K(x,y)$ of the Fisher metric $\td s^2$ on the beta manifold $\bigl(M,\td s^2\bigr)$ and obtained that
\begin{equation}\label{curvature-polygamma-exp}
K(x,y)=\frac{1}{4}\frac{\psi''(x)\psi''(y)\psi''(x+y) \Bigl[\frac{\psi'(x)}{\psi''(x)}+\frac{\psi'(y)}{\psi''(y)}-\frac{\psi'(x+y)}{\psi''(x+y)}\Bigr]}
{[\psi'(x)\psi'(x+y)+\psi'(y)\psi'(x+y)-\psi'(x)\psi'(y)]^2}, \quad x,y\in M.
\end{equation}
\par
In~\cite[Proposition~5]{brigant2019fisherrao}, \cite[Theorem~6]{brigant2020fisherraoV1}, \cite[Theorem~6]{brigant2020fisherrao}, and~\cite[Theorem~6]{brigant2021fisherrao}, the authors established that the sectional curvature $K(x,y)$ is negative and bounded from below, and they subsequently proposed the following conjecture.

\begin{conj}[{\cite[Proposition~5]{brigant2019fisherrao}, \cite[pp.~12--13]{brigant2020fisherraoV1}, \cite[p.~13--14]{brigant2020fisherrao}, and~\cite[p.~14]{brigant2021fisherrao}}]\label{Alice-lower-bound-conj}
For $x,y>0$, the sectional curvature $K(x,y)$ given in~\eqref{curvature-polygamma-exp} satisfies that
\begin{enumerate}
\item
it has a lower bound $-\frac{1}{2}$, accurately, $K(x,y)>-\frac{1}{2}$;
\item
it is decreasing in both $x$ and $y$.
\end{enumerate}
\end{conj}

In the review article~\cite{manifold-gamma.tex}, together with about ten papers cited therein, several functions involving the polygamma functions $\psi^{(n)}(x)$---originating from the study of the sectional curvature $K(x,y)$---were investigated extensively and deeply.
\par
Taking $y=x$ in~\eqref{curvature-polygamma-exp} yields that
\begin{equation}\label{K(x-x)-Eq}
\mathcal{K}(x)=K(x,x)=\frac{1}{4}\frac{\psi''(x)}{[\psi'(x)]^2} \frac{2\psi'(x)\psi''(2x)-\psi'(2x)\psi''(x)}{[\psi'(x)-2\psi'(2x)]^2}, \quad x\in(0,\infty).
\end{equation}
In~\cite[Theorem~4.1]{RIMA-D-20-01130.tex}, the sharp double inequality
\begin{equation}\label{sectional-doub-ineq}
0<\frac{2\psi'(x)\psi''(2x)-\psi'(2x)\psi''(x)}{[\psi'(x)-2\psi'(2x)]^2}<2, \quad x\in(0,\infty)
\end{equation}
was established. Consequently, the double inequality
\begin{equation}\label{sect-curvat-ineq}
0>\mathcal{K}(x)>-\frac{1}{2}, \quad x\in(0,\infty)
\end{equation}
was derived in~\cite[Theorem~5.1]{RIMA-D-20-01130.tex}. This may be viewed as the initial and partial confirmation of Conjecture~\ref{Alice-lower-bound-conj}.
\par
In this paper, we aim to prove that the sectional curvature $\mathcal{K}(x)$ defined in~\eqref{K(x-x)-Eq} is decreasing for $x\in(0,\infty)$. This result can be regarded as the second, though partial, confirmation of Conjecture~\ref{Alice-lower-bound-conj}.

\section{Two lemmas}
In order to attain the aim of this paper, we need the following two lemmas.

\begin{lem}\label{lem:general}
Let $f(x)$ be thrice continuously differentiable and $H(x)=-\frac{f(x)}{f'(x)}$ on $(0,\infty)$. If
$$
f(x)>0,\quad f'(x)<0, \quad 0<H'(x)<1, \quad H''(x)\geq 0
$$
on $(0,\infty)$, then, for every given $a>0$, the function
$$
F_a(x)=\frac{1}{1-\frac{f(x+a)}{f(x)}}
$$
is strictly convex on $(0,\infty)$.
\end{lem}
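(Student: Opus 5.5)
We reduce everything to a one-variable composition. Put $\varphi(x)=-[\ln f(x)]'=-\frac{f'(x)}{f(x)}=\frac{1}{H(x)}>0$. Then
\begin{equation*}
\frac{f(x+a)}{f(x)}=\exp\biggl(-\int_x^{x+a}\varphi(t)\td t\biggr),
\end{equation*}
so that $F_a(x)=G(\Phi(x))$ with $\Phi(x)=\int_x^{x+a}\varphi(t)\td t=\int_0^a\frac{\td t}{H(x+t)}>0$ and $G(u)=\frac{1}{1-\te^{-u}}$. A direct computation gives $G'(u)=-\frac{1}{4\sinh^2(u/2)}$ and $G''(u)=\frac{\cosh(u/2)}{4\sinh^3(u/2)}$. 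Hence
\begin{equation*}
F_a''(x)=\frac{1}{4\sinh^2(\Phi/2)}\Bigl[\coth\frac{\Phi}{2}\,(\Phi')^2-\Phi''\Bigr].
\end{equation*}
The aim is therefore to prove $\coth\frac{\Phi}{2}(\Phi')^2>\Phi''$ on $(0,\infty)$.

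First, since $H'>0$, the function $\varphi=1/H$ is strictly decreasing, and therefore $\Phi'(x)=\varphi(x+a)-\varphi(x)<0$; in particular $\Phi'\ne0$. Next, the elementary inequality $\coth\frac{u}{2}>\frac{2}{u}$ for $u>0$ shows that it suffices to establish
\begin{equation*}
2(\Phi')^2\ge \Phi\Phi''.
\end{equation*}
This is precisely the convexity of $1/\Phi$, because $\bigl(\frac{1}{\Phi}\bigr)''=\frac{2(\Phi')^2-\Phi\Phi''}{\Phi^3}$. The strictness in the final inequality then comes from $\coth\frac{\Phi}{2}>\frac{2}{\Phi}$ together with $\Phi'\ne0$.

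The main obstacle is proving $2(\Phi')^2\ge\Phi\Phi''$, that is, that the ``harmonic mean'' $\frac1\Phi$ of $H$ over $[x,x+a]$ is convex in $x$. The hypotheses enter through $\varphi=1/H$:
\begin{itemize}
\item $\varphi'=-\frac{H'}{H^2}$ and $\varphi''=\frac{2(H')^2-HH''}{H^3}$, so the convexity $H''\ge0$ yields the pointwise inequality $\varphi\varphi''\le2(\varphi')^2$;
\item the bound $H'<1$ gives $H(x+a)<H(x)+a$, and hence the lower bound $\Phi>\frac{a}{H(x)+a}$ as well as control of the ratio $\frac{\varphi(x)}{\varphi(x+a)}<1+\frac{a}{H(x)}$.
\end{itemize}
I would write $\Phi''=\varphi'(x+a)-\varphi'(x)$ and $\Phi'=\varphi(x+a)-\varphi(x)$ in terms of $H$ at the two endpoints $x$ and $x+a$, and then try to bound $\Phi$ from above by a trapezoid-type estimate for $\int_x^{x+a}\frac{\td t}{H(t)}$ (available since $1/H$ is convex whenever $H$ is concave, or directly via $H(t)\ge H(x)+H'(x)(t-x)$). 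The resulting inequality between endpoint values should reduce to a two-variable inequality in $H(x)$, $H(x+a)$ and $H'$, to be closed using $0<H'<1$.

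If this route proves too lossy, the fallback is not to discard the factor $\coth\frac{\Phi}{2}$. Instead I would use the sharper bound $\coth\frac{u}{2}\ge\frac{2}{u}+\frac{u}{6}$. The extra term can absorb the defect of the harmonic-mean convexity, and the condition $H'<1$ controls the size of that defect.
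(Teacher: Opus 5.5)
Your reduction of $F_a''>0$ to $\coth\frac{\Phi}{2}\,(\Phi')^2>\Phi''$ is correct and matches the paper's; your $\Phi$ is the paper's $\Lambda$. The gap is the next step. Your sufficient condition $2(\Phi')^2\ge\Phi\Phi''$, i.e.\ convexity of $1/\Phi$, is false under the hypotheses, so no endpoint or trapezoid estimate can prove it.

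Take $f(x)=x^{-2}$, so that $H(x)=\frac{x}{2}$, $H'=\frac12$ and $H''=0$. Then $\Phi=2\ln\bigl(1+\frac{a}{x}\bigr)$. With $s=\frac{a}{x}$, your inequality becomes $\frac{2s}{2+s}\ge\ln(1+s)$, which fails for every $s>0$. So here $1/\Phi$ is strictly concave, even though $F_a(x)=\frac{(x+a)^2}{a(2x+a)}$ is convex.

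The failure is structural. For every linear $H=cx+d$ with $0<c<1$ one computes $\frac{\Phi''}{(\Phi')^2}=c\coth\frac{c\Phi}{2}$, which always exceeds $\frac{2}{\Phi}$. The inequality actually needed is therefore $\coth\frac{\Phi}{2}>c\coth\frac{c\Phi}{2}$. Its margin vanishes as $c\to1$; for $H=x+d$ the function $F_a$ is linear. Replacing $\coth\frac{\Phi}{2}$ by $\frac{2}{\Phi}$ discards exactly this margin. Your pointwise inequality $\varphi\varphi''\le2(\varphi')^2$ is right, but it does not transfer to the integral $\Phi=\int_x^{x+a}\varphi$.

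The fallback does not repair this. The bound $\coth\frac{u}{2}\ge\frac{2}{u}+\frac{u}{6}$ is false for every $u>0$. The reverse holds, because $\coth v-\frac1v<\frac v3$; for example, $\coth 1\approx1.313<\frac43$.

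The missing idea, suggested by the linear case, is to keep $\coth$ on both sides and compare it with itself at two scales via the monotonicity of $z\coth z$. The paper does exactly this in three steps.
\begin{enumerate}
\item From $H'(x+a)\ge H'(x)$ it gets $\frac{\Lambda''}{(\Lambda')^2}\le H'(x)\coth\frac{L}{2}$, where $L=\ln\frac{H(x+a)}{H(x)}$.
\item From $L=\int_x^{x+a}\frac{H'(t)}{H(t)}\,\td t$ it gets $L\ge H'(x)\Lambda$.
\item Since $z\coth z$ is strictly increasing and $\frac{L}{2H'(x)}>\frac{L}{2}$, it concludes $\coth\frac{\Lambda}{2}\ge\coth\frac{L}{2H'(x)}>H'(x)\coth\frac{L}{2}$. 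This is where $H'<1$ enters, and sharply so.
\end{enumerate}
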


\begin{proof}
Define $\Lambda(x)=\ln\frac{f(x)}{f(x+a)}$ for $x\in(0,\infty)$.
Since $\frac{f'(x)}{f(x)}=-\frac1{H(x)}$, we have
$$
\Lambda(x)=\ln f(x)-\ln f(x+a)
=\int_{x+a}^{x}\frac{f'(t)}{f(t)}\td t
=
\int_x^{x+a}\frac{\td t}{H(t)}>0.
$$
Consequently, it follows that
$$
\Lambda'(x)=\frac1{H(x+a)}-\frac1{H(x)}<0
$$
and
$$
\Lambda''(x)
=\frac{H'(x)}{H^2(x)}-\frac{H'(x+a)}{H^2(x+a)}.
$$
Therefore, we acquire
\begin{equation}\label{eq:lambda-bound}
\begin{aligned}
\frac{\Lambda''(x)}{[\Lambda'(x)]^2}
&=\frac{H'(x)H^2(x+a)-H'(x+a)H^2(x)}{[H(x+a)-H(x)]^2}\\
&\leq \frac{H'(x)[H^2(x+a)-H^2(x)]}{[H(x+a)-H(x)]^2}\\
&=H'(x)\frac{H(x+a)+H(x)}{H(x+a)-H(x)}\\
&=H'(x)\coth\frac{L(x)}{2}, \quad x\in(0,\infty),
\end{aligned}
\end{equation}
where
$$
L(x)=\ln\frac{H(x+a)}{H(x)}>0, \quad x\in(0,\infty).
$$
Moreover, it is easy to see that
$$
L(x)=\int_x^{x+a}\frac{H'(t)}{H(t)}\td t
\geq H'(x)\int_x^{x+a}\frac{\td t}{H(t)}
=H'(x)\Lambda(x).
$$
Hence, we arrive at
\begin{equation}\label{LL-ineq}
\frac{\Lambda(x)}{2}\leq\frac{L(x)}{2H'(x)}, \quad x\in(0,\infty).
\end{equation}
\par
Let
$$
G(z)=z\coth z>0, \quad z\in(0,\infty).
$$
Direct differentiation gives
$$
G'(z)=\frac{\sinh z\cosh z-z}{\sinh^2z}
$$
and
$$
(\sinh z\cosh z-z)'=2\sinh^2z>0.
$$
This implies that $G'(z)>0$ and then $G(z)$ is strictly increasing for $z\in(0,\infty)$.

Since $0<H'(x)<1$, it follows that
$$
\frac{L(x)}{2H'(x)}>\frac{L(x)}{2}.
$$
The strict increase of $G(z)$ results in
$$
\coth\frac{L(x)}{2H'(x)}>H'(x)\coth\frac{L(x)}{2}.
$$
Since $\coth z$ is strictly decreasing on $(0,\infty)$ and the inequality~\eqref{LL-ineq}, we obtain
$$
\coth\frac{\Lambda(x)}{2}
\geq
\coth\frac{L(x)}{2H'(x)}
>
H'(x)\coth\frac{L(x)}{2}.
$$
Combining this with~\eqref{eq:lambda-bound} leads to the inequality
\begin{equation}\label{eq:key-inequality}
\frac{\Lambda''(x)}{[\Lambda'(x)]^2}<\coth\frac{\Lambda(x)}{2}, \quad x\in(0,\infty).
\end{equation}
\par
Now assume that
$$
q(x)=\te^{-\Lambda(x)}=\frac{f(x+a)}{f(x)}, \quad x\in(0,\infty).
$$
Then $0<q(x)<1$ and
$$
F_a(x)=\frac1{1-q(x)}.
$$
Furthermore, we have
$$
q'(x)=-q(x)\Lambda'(x)\quad\text{and}\quad
q''(x)=q(x)\bigl([\Lambda'(x)]^2-\Lambda''(x)\bigr).
$$
Thus, the second derivative
$$
F_a''(x)=\frac{q''(x)}{[1-q(x)]^2}+\frac{2[q'(x)]^2}{[1-q(x)]^3}
=
\frac{q(x)}{[1-q(x)]^2}
\left[
\frac{1+q(x)}{1-q(x)}[\Lambda'(x)]^2-\Lambda''(x)
\right].
$$
Since
$$
\frac{1+q(x)}{1-q(x)}
=
\coth\frac{\Lambda(x)}{2},
$$
we obtain
$$
F_a''(x)
=
\frac{\te^{-\Lambda(x)}[\Lambda'(x)]^2}{[1-\te^{-\Lambda(x)}]^2}
\biggl[\coth\frac{\Lambda(x)}{2}-\frac{\Lambda''(x)}{[\Lambda'(x)]^2}\biggr].
$$
The bracket is strictly positive by~\eqref{eq:key-inequality}.
Therefore, it follows that $F_a''(x)>0$. The proof of Lemma~\ref{lem:general} is thus complete.
\end{proof}

Applying Lemma~\ref{lem:general} to $f(x)=\bigl|\psi^{(n)}(x)\bigr|$, we obtain the second lemma below.

\begin{lem}\label{OpenProb-convex}
For fixed $\alpha>0$ and $n\in\mathbb{N}$, the function
\begin{equation}\label{Phi_{n-alpha}(x)}
\Phi_{n,\alpha}(x)=\frac{1}{1-\frac{\psi^{(n)}(x+\alpha)}{\psi^{(n)}(x)}}
\end{equation}
is strictly increasing and strictly convex on $(0,\infty)$.
\end{lem}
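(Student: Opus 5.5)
The plan is to apply Lemma~\ref{lem:general} with $f(x)=\bigl|\psi^{(n)}(x)\bigr|=(-1)^{n+1}\psi^{(n)}(x)$ and $a=\alpha$. Since the sign cancels in the ratio, we have $\frac{\psi^{(n)}(x+\alpha)}{\psi^{(n)}(x)}=\frac{f(x+\alpha)}{f(x)}$, so $\Phi_{n,\alpha}=F_\alpha$. Monotonicity is immediate once $f>0$ and $f'<0$ are known. The map $x\mapsto\frac{f(x+\alpha)}{f(x)}=e^{-\Lambda(x)}$ is increasing because $\Lambda'(x)=\frac1{H(x+\alpha)}-\frac1{H(x)}<0$ whenever $H$ is increasing. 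This ratio also lies in $(0,1)$, so $\Phi_{n,\alpha}$ is strictly increasing. Alternatively, one can use the log-convexity of $f$, which follows from the Cauchy--Schwarz inequality applied to the integral representation below.

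The first step is to record the hypotheses on $f$. From the integral representation
\[
f(x)=\int_0^\infty \frac{t^n}{1-e^{-t}}e^{-xt}\,\td t,
\]
the function $f$ is completely monotonic. Hence $f>0$ and $f'=(-1)^{n+1}\psi^{(n+1)}<0$. Next, set $H=-f/f'=-\psi^{(n)}/\psi^{(n+1)}$. Then
\[
H'(x)=\frac{f(x)f''(x)}{[f'(x)]^2}-1 .
\]
So the condition $0<H'<1$ becomes the two-sided Turán-type inequality
\[
1<\frac{\psi^{(n)}(x)\psi^{(n+2)}(x)}{[\psi^{(n+1)}(x)]^2}<2 .
\]
The left inequality is strict log-convexity of $f$, which is Cauchy--Schwarz on the Laplace representation. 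The right inequality is a known sharp result for polygamma functions (Batir, Alzer, and Qi et al.). There the ratio increases from $\frac{n+1}{n}\le 2$ at $0^+$ toward $\frac{n+2}{n+1}$ at infinity, with the strict upper bound $2$ holding for all $n\ge1$. I would cite that result, or prove it via the convolution/Laplace-transform technique: write $2[f']^2-ff''$ as a Laplace transform of a kernel and show the kernel is positive.

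The remaining hypothesis is $H''\ge0$, that is, convexity of $x\mapsto-\psi^{(n)}(x)/\psi^{(n+1)}(x)$. I expect this to be the main obstacle. As a heuristic, near $0$ we have $H(x)\approx x/n$, and as $x\to\infty$ the asymptotics $f(x)\sim(n-1)!/x^n+n!/(2x^{n+1})+\cdots$ give $H(x)\sim x/n+c+O(1/x)$ with an $O(1/x)$ correction of positive coefficient, consistent with convexity. To prove it, I would first try the known results that $[\psi^{(n)}]^2/\ldots$-type ratios are convex, or the equivalent statement that $x\mapsto\psi^{(n+1)}(x)/\psi^{(n)}(x)$ is a Stieltjes-like/completely monotonic function. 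If $\frac{1}{H}=-f'/f$ is completely monotonic, or logarithmically so, one can deduce convexity of $H$: $1/H$ is then log-convex and decreasing, and the reciprocal of a positive log-convex decreasing function need not be convex in general. So the fallback is a direct computation. One writes
\[
H''=\frac{(f'f''' -[f'']^2)f\,f'+2f[f'']^2 f'-\ldots}{[f']^3},
\]
reduces it to the sign of a combination of products of three Laplace transforms, and symmetrizes the resulting triple integral to show positivity of its kernel. With all hypotheses of Lemma~\ref{lem:general} verified, strict convexity of $\Phi_{n,\alpha}$ follows at once.
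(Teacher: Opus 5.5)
\textbf{The gap is the hypothesis $H''\ge 0$ of Lemma~\ref{lem:general}.} You call it the main obstacle and then leave it open. You yourself observe that complete (or logarithmic) monotonicity of $-f'/f$ would not give convexity of $H$, and the fallback symmetrization of a triple Laplace integral is only announced, never carried out. So as written you prove only the ``strictly increasing'' half of the lemma, not the convexity.

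No new work is needed, however. Since $H'(x)=\frac{\psi^{(n)}(x)\psi^{(n+2)}(x)}{[\psi^{(n+1)}(x)]^2}-1$, the condition $H''\ge0$ is \emph{literally} the statement that this Tur\'an-type ratio is increasing on $(0,\infty)$. Equivalently, $\frac{[\psi^{(n+1)}]^2}{\psi^{(n)}\psi^{(n+2)}}$ is decreasing. That is the same monotonicity theorem (Alzer--Wells, Yang) you were already quoting to get the upper bound $2$. This is exactly how the paper proceeds: one citation gives both $\frac{1}{n+1}<H'<\frac{1}{n}$ and the increase of $H'$, hence all hypotheses of Lemma~\ref{lem:general}.

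What probably hid this is that your limits are swapped. As $x\to0^+$ we have $|\psi^{(k)}(x)|\sim k!/x^{k+1}$, and as $x\to\infty$ we have $|\psi^{(k)}(x)|\sim (k-1)!/x^{k}$. Hence the ratio $\frac{\psi^{(n)}\psi^{(n+2)}}{[\psi^{(n+1)}]^2}$ tends to $\frac{n+2}{n+1}$ at $0^+$ and to $\frac{n+1}{n}$ at $\infty$, increasing in between. Correspondingly $H(x)\approx\frac{x}{n+1}$ near $0$, not $\frac{x}{n}$. Your sentence ``increases from $\frac{n+1}{n}$ at $0^+$ toward $\frac{n+2}{n+1}$ at infinity'' contradicts itself, since $\frac{n+1}{n}>\frac{n+2}{n+1}$. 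Taken at face value, those endpoints would make a monotone $H'$ decreasing, i.e.\ $H''\le0$, which is the opposite of what you need.

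The rest of your proposal is sound: the choice $f=|\psi^{(n)}|$ and $a=\alpha$, the facts $f>0$ and $f'<0$, $H'>0$ by Cauchy--Schwarz, and your monotonicity argument via $\Lambda'=\frac{1}{H(x+\alpha)}-\frac{1}{H(x)}<0$. That last argument is more direct than the paper's. The paper computes $\Phi_{n,\alpha}'$, shows $\Phi_{n,\alpha}'(0^+)=0$ using $x^{k}\psi^{(k-1)}(x)\to(-1)^k(k-1)!$, and then invokes the convexity just proved.
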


\begin{proof}
In~\cite[p.~260, Entry~6.4.1]{abram}, the integral representation
\begin{equation}\label{psin}
\psi^{(n)}(z)=(-1)^{n+1}\int_{0}^{\infty}\frac{t^n}{1-\te^{-t}}\te^{-zt}\td t
\end{equation}
is listed for $\Re(z)>0$ and $n\in\mathbb{N}$.
Letting 
$$
f(x)=\bigl|\psi^{(n)}(x)\bigr|=(-1)^{n+1}\psi^{(n)}(x)>0
$$ 
for $x\in(0,\infty)$ in Lemma~\ref{lem:general} gives that
\begin{equation*}
f(x)=\int_{0}^{\infty}\frac{t^n}{1-\te^{-t}}\te^{-xt}\td t>0, \quad x\in(0,\infty).
\end{equation*}
Accordingly,
\begin{equation*}
f'(x)=-\int_{0}^{\infty}\frac{t^{n+1}}{1-\te^{-t}}\te^{-xt}\td t<0, \quad x\in(0,\infty)
\end{equation*}
and
\begin{equation*}
H(x)=\frac{\int_{0}^{\infty}\frac{t^n}{1-\te^{-t}}\te^{-xt}\td t} {\int_{0}^{\infty}\frac{t^{n+1}}{1-\te^{-t}}\te^{-xt}\td t}
=-\frac{\psi^{(n)}(x)}{\psi^{(n+1)}(x)}, \quad x\in(0,\infty).
\end{equation*}
\par
Differentiating yields
\begin{equation*}
H'(x)=\frac{\psi^{(n)}(x)\psi^{(n+2)}(x)}{[\psi^{(n+1)}(x)]^2}-1
\end{equation*}
and
\begin{equation*}
H''(x)=\frac{\psi^{(n+2)}(x)}{\psi^{(n+1)}(x)}+\frac{\psi^{(n)}(x) \psi^{(n+3)}(x)}{[\psi^{(n+1)}(x)]^2} -\frac{2\psi^{(n)}(x)[\psi^{(n+2)}(x)]^2}{[\psi^{(n+1)}(x)]^3}.
\end{equation*}
\par
The first statement in~\cite[Theorem~2]{Yang-JMAA-2017} reads that the function $\frac{[\psi^{(n+1)}(x)]^2}{\psi^{(n)}\psi^{(n+2)}(x)}$ for $n\in\mathbb{N}$ is strictly decreasing from $(0,\infty)$ onto $\bigl(\frac{n}{n+1},\frac{n+1}{n+2}\bigr)$.
See also~\cite[Corollary~2.3]{Alzer-Wells-SIAM-98}, \cite[Section~3.5, pp.~10--12]{Qi-Agar-Surv-JIA.tex}, and~\cite[p.~3, Eq.~(8)]{Qi-Nantomah-Lim.tex}. Consequently, the first derivative $H'(x)$ is strictly increasing from $(0,\infty)$ onto $\bigl(\frac{n+2}{n+1}-1, \frac{n+1}{n}-1\bigr)=\bigl(\frac{1}{n+1},\frac{1}{n}\bigr)$ and then the second derivative $H''(x)$ is positive. As a result, all the hypotheses of Lemma~\ref{lem:general} are satisfied and the function $\Phi_{n,\alpha}(x)$ defined by~\eqref{Phi_{n-alpha}(x)} is strictly convex on $(0,\infty)$.
\par
Straightforward computation yields
\begin{align*}
\Phi_{n,\alpha}'(x)&=\frac{\psi^{(n+1)}(x)\psi^{(n+1)}(x+\alpha)}{[\psi^{(n)}(x)-\psi^{(n)}(x+\alpha)]^2} \biggl[\frac{\psi^{(n)}(x)}{\psi^{(n+1)}(x)} -\frac{\psi^{(n)}(x+\alpha)}{\psi^{(n+1)}(x+\alpha)}\biggr]\\
&=\frac{x^n\bigl[x^{n+2}\psi^{(n+1)}(x)\bigr]\psi^{(n+1)}(x+\alpha)}{[x^{n+1}\psi^{(n)}(x)-x^{n+1}\psi^{(n)}(x+\alpha)]^2} \biggl[\frac{x[x^{n+1}\psi^{(n)}(x)]}{x^{n+2}\psi^{(n+1)}(x)} -\frac{\psi^{(n)}(x+\alpha)}{\psi^{(n+1)}(x+\alpha)}\biggr]\\
&\to0
\end{align*}
as $x\to0^+$, where we used the limit
\begin{equation*}
\lim_{x\to0^+}\bigl[x^k\psi^{(k-1)}(x)\bigr]=(-1)^{k}(k-1)!, \quad k\in\mathbb{N}
\end{equation*}
in~\cite[Lemma~2.1]{RIMA-D-20-01130.tex}. Consequently, from the facts that $\Phi_{n,\alpha}''(x)>0$ and $\Phi_{n,\alpha}'(x)$ is strictly increasing on $(0,\infty)$, it follows that $\Phi_{n,\alpha}'(x)>0$ on $(0,\infty)$, and then that the function $\Phi_{n,\alpha}(x)$ is strictly increasing on $(0,\infty)$.
The proof of Lemma~\ref{OpenProb-convex} is thus complete.
\end{proof}

\begin{rem}
From the proof of Lemma~\ref{OpenProb-convex}, we can conclude that, for $n\in\mathbb{N}$, the inequality
\begin{equation*}
\frac{1}{2}\biggl[\frac{\psi^{(n+2)}(x)}{\psi^{(n+1)}(x)}+\frac{\psi^{(n)}(x) \psi^{(n+3)}(x)}{[\psi^{(n+1)}(x)]^2}\biggr] >\frac{\psi^{(n)}(x)[\psi^{(n+2)}(x)]^2}{[\psi^{(n+1)}(x)]^3}
\end{equation*}
is valid on $(0,\infty)$.
\end{rem}

\begin{rem}
Lemma~\ref{OpenProb-convex} is a complete nice answer to the question announced at \url{https://mathoverflow.net/q/396837} (accessed on 5 September 2026).
\end{rem}

\begin{rem}
Lemma~4 in~\cite{Yang-Tian-JIA-2017} states that, if the ratio $\frac{A(t)}{B(t)}$ is increasing, then the ratio $\frac{\int_{0}^{\infty}A(t)\te^{-xt}\td t}{\int_{0}^{\infty}B(t)\te^{-xt}\td t}$ is decreasing on $(0,\infty)$, where the functions $A(t)$ and $B(t)\ne0$ are defined on $(0,\infty)$ such that their Laplace transforms exist. A more general ratio monotonicity rule was established in~\cite[Lemma~9 and Remark~15]{Alice-y-x-conj-one.tex} together with~\cite[Remark~7.2]{Ouimet-LCM-BKMS.tex}. By virtue of the integral representation~\eqref{psin}, we have
\begin{equation*}
\frac{\psi^{(n)}(x+\alpha)}{\psi^{(n)}(x)}=\frac{\int_{0}^{\infty}\frac{t^n\te^{-\alpha t}}{1-\te^{-t}}\te^{-xt}\td t} {\int_{0}^{\infty}\frac{t^n}{1-\te^{-t}}\te^{-xt}\td t}.
\end{equation*}
Accordingly, in light of the above-mentioned ratio monotonicity rule~\cite[Lemma~9]{Alice-y-x-conj-one.tex} or~\cite[Lemma~4]{Yang-Tian-JIA-2017}, we arrive at the increase of $\frac{\psi^{(n)}(x+\alpha)}{\psi^{(n)}(x)}$ in $x\in(0,\infty)$. As a result, the increase (without strictness) of the function $\Phi_{n,\alpha}(x)$ on $(0,\infty)$ is alternatively proved.
\end{rem}

\section{The decrease of $\mathcal{K}(x)$}
Now we are in a position to prove our main result.

\begin{thm}\label{sect-curvat-decr-thm}
The sectional curvature $\mathcal{K}(x)$ defined in~\eqref{K(x-x)-Eq} is strictly decreasing on $(0,\infty)$. Consequently, the inequality~\eqref{sect-curvat-ineq} is sharp.
\end{thm}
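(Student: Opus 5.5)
The plan is to rewrite $\mathcal{K}(x)$ as minus a product of two positive, strictly increasing functions. Lemma~\ref{OpenProb-convex} with $n=1$ and $\alpha=\frac12$ supplies one factor, and the Yang ratio result already used in the proof of Lemma~\ref{OpenProb-convex} supplies the other.

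\emph{Step 1: factor $\mathcal{K}(x)$.} Set $r(x)=\frac{\psi'(2x)}{\psi'(x)}$. Then
\begin{equation*}
r'(x)=\frac{2\psi'(x)\psi''(2x)-\psi'(2x)\psi''(x)}{[\psi'(x)]^2},
\end{equation*}
so the second fraction in~\eqref{K(x-x)-Eq} equals $\frac{r'(x)}{[1-2r(x)]^2}$. Consequently
\begin{equation*}
\mathcal{K}(x)=\frac14\frac{\psi''(x)}{[\psi'(x)]^2}\frac{r'(x)}{[1-2r(x)]^2}
=-\frac18\biggl[\frac{1}{\psi'(x)}\biggr]'\biggl[\frac{1}{1-2r(x)}\biggr]'.
\end{equation*}
Next use the duplication formula $\psi'(2x)=\frac14\bigl[\psi'(x)+\psi'\bigl(x+\frac12\bigr)\bigr]$. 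It gives $\psi'(x)-2\psi'(2x)=\frac12\bigl[\psi'(x)-\psi'\bigl(x+\frac12\bigr)\bigr]$, and hence $\frac1{1-2r(x)}=2\Phi_{1,1/2}(x)$. Therefore
\begin{equation*}
\mathcal{K}(x)=-\frac14\biggl[\frac{1}{\psi'(x)}\biggr]'\Phi_{1,1/2}'(x).
\end{equation*}

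\emph{Step 2: monotonicity of each factor.} By Lemma~\ref{OpenProb-convex}, $\Phi_{1,1/2}$ is strictly increasing and strictly convex, so $\Phi_{1,1/2}'(x)$ is positive and strictly increasing. For the other factor, $\bigl[\frac1{\psi'(x)}\bigr]'=-\frac{\psi''(x)}{[\psi'(x)]^2}>0$, and
\begin{equation*}
\biggl[\frac1{\psi'(x)}\biggr]''=\frac{2[\psi''(x)]^2-\psi'(x)\psi'''(x)}{[\psi'(x)]^3}.
\end{equation*}
This is positive if and only if $\frac{[\psi''(x)]^2}{\psi'(x)\psi'''(x)}>\frac12$. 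That inequality follows from the result of~\cite[Theorem~2]{Yang-JMAA-2017} quoted in the proof of Lemma~\ref{OpenProb-convex}: with $n=1$, the ratio takes its values in $\bigl(\frac12,\frac23\bigr)$. So $\bigl[\frac1{\psi'}\bigr]'$ is also positive and strictly increasing.

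A product of two positive strictly increasing functions is strictly increasing. Hence $\mathcal{K}(x)$ is strictly decreasing on $(0,\infty)$.

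\emph{Step 3: sharpness.} Since $\mathcal{K}$ is monotone, sharpness of~\eqref{sect-curvat-ineq} reduces to the limits $\lim_{x\to0^+}\mathcal{K}(x)=-\frac12$ and $\lim_{x\to\infty}\mathcal{K}(x)=0$. For the limit at $0^+$, I will use $x^k\psi^{(k-1)}(x)\to(-1)^k(k-1)!$ from~\cite[Lemma~2.1]{RIMA-D-20-01130.tex}. This gives $\frac{\psi''(x)}{[\psi'(x)]^2}\to-2$, while the second fraction in~\eqref{K(x-x)-Eq} tends to its sharp upper bound $2$ from~\eqref{sectional-doub-ineq}. I will double-check that limit value directly from the leading terms: near $0^+$, $\psi'(x)\approx x^{-2}$ and $\psi''(x)\approx-2x^{-3}$. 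For the limit at $\infty$, I will use the asymptotics $\psi'(x)\sim\frac1x$ and $\psi''(x)\sim-\frac1{x^2}$.

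The main obstacle is Step~1: recognising that the duplication formula turns the $x$-dependent shift $2x=x+x$ into the fixed shift $\alpha=\frac12$, so that Lemma~\ref{OpenProb-convex} applies. The secondary point to check is the exact value of the $x\to0^+$ limit.
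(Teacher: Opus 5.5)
Your Steps~1 and~2 are correct and are essentially the paper's argument. The paper likewise uses the duplication formula to recognise the third factor of~\eqref{K(x-x)-Eq} as $\Phi_{1,1/2}'(x)$, and applies Lemma~\ref{OpenProb-convex} with $n=1$, $\alpha=\frac12$. The only difference is the decrease of $\frac{\psi''(x)}{[\psi'(x)]^2}$. The paper cites an external theorem for it, whereas you derive it from the convexity of $\frac{1}{\psi'(x)}$ via Yang's bound $\frac{[\psi''(x)]^2}{\psi'(x)\psi'''(x)}>\frac12$ (i.e.\ the condition $H'<1$ for $n=1$). That keeps the proof self-contained. The problem is Step~3, the sharpness claim.

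The limits you propose are the wrong way round. The correct values are $\lim_{x\to0^+}\mathcal{K}(x)=0$ and $\lim_{x\to\infty}\mathcal{K}(x)=-\frac12$, and yours are incompatible with the strict decrease you have just proved. Both ingredients at $0^+$ are wrong.
\begin{itemize}
\item By the very limits you cite, $\frac{\psi''(x)}{[\psi'(x)]^2}=x\,\frac{x^3\psi''(x)}{[x^2\psi'(x)]^2}\sim-2x\to0$, not $-2$, which even lies outside the range $(-1,0)$ of this factor. Your proposed double-check with $\psi'(x)\approx x^{-2}$, $\psi''(x)\approx-2x^{-3}$ gives exactly this.
\item The third factor equals $\Phi_{1,1/2}'$, which you showed is increasing. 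So it tends to its sharp lower bound $0$ as $x\to0^+$ (in fact it behaves like $\pi^2x$), and approaches its sharp upper bound $2$ only as $x\to\infty$.
\end{itemize}
Even taken at face value, your numbers would give $\frac14(-2)(2)=-1$, not $-\frac12$.

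A correct Step~3 runs as follows. At $0^+$, the bound $0<\Phi_{1,1/2}'<2$ together with $\frac{\psi''(x)}{[\psi'(x)]^2}\to0$ gives $\mathcal{K}(x)\to0$. At $\infty$, $\psi'(x)\sim\frac1x$ and $\psi''(x)\sim-\frac1{x^2}$ give $\frac{\psi''(x)}{[\psi'(x)]^2}\to-1$. These leading terms are not enough for the third factor, because they cancel in both its numerator and denominator as written in~\eqref{K(x-x)-Eq}. Instead, obtain $\Phi_{1,1/2}'(x)\to2$ in one of two ways:
\begin{itemize}
\item from the sharpness of~\eqref{sectional-doub-ineq} combined with the monotonicity of $\Phi_{1,1/2}'$; or
\item from the second-order expansions $\psi'(x)=\frac1x+\frac1{2x^2}+O(x^{-3})$ and $\psi''(x)=-\frac1{x^2}-\frac1{x^3}+O(x^{-4})$.
\end{itemize}
Then $\mathcal{K}(x)\to\frac14(-1)(2)=-\frac12$. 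Alternatively, simply cite both limits from the earlier work, as the paper does.
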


\begin{proof}
In~\cite[Theorem~3.2]{RIMA-D-20-01130.tex}, the second factor $\frac{\psi''(x)}{[\psi'(x)]^2}$ in~\eqref{K(x-x)-Eq} was proved to be decreasing from $(0,\infty)$ onto $(-1,0)$.
\par
The third factor in~\eqref{K(x-x)-Eq} can be reformulated as
\begin{multline*}
\frac{2\psi'(x)\psi''(2x)-\psi'(2x)\psi''(x)}{[\psi'(x)-2\psi'(2x)]^2}
=\frac{\psi'(x)\psi''\bigl(x+\frac{1}{2}\bigr) -\psi'\bigl(x+\frac{1}{2}\bigr)\psi''(x)} {\bigl[\psi'(x)-\psi'\bigl(x+\frac{1}{2}\bigr)\bigr]^2}\\
=\biggl[\frac{\psi'\bigl(x+\frac12\bigr)}{\psi'(x)}\biggr]' \frac{1}{\Bigl[1-\frac{\psi'(x+\frac{1}{2})}{\psi'(x)}\Bigr]^2}
=\left[\frac{1}{1-\frac{\psi'(x+\frac{1}{2})}{\psi'(x)}}\right]',
\end{multline*}
where we utilized the first two derivatives
\begin{equation*}
\psi'(2z)=\frac{1}{4}\biggl[\psi'(z)+\psi'\biggl(z+\frac{1}{2}\biggr)\biggr]
\end{equation*}
and
\begin{equation*}
\psi''(2z)=\frac{1}{8}\biggl[\psi''(z)+\psi''\biggl(z+\frac{1}{2}\biggr)\biggr]
\end{equation*}
of the duplication formula
\begin{equation*}
\psi(2z)=\frac{1}{2}\biggl[\psi(z)+\psi\biggl(z+\frac{1}{2}\biggr)\biggr]+\ln2
\end{equation*}
in~\cite[p.~259, 6.3.8]{abram}. In view of Lemma~\ref{OpenProb-convex} applied to $n=1$ and $\alpha=\frac{1}{2}$, we conclude that
\begin{equation*}
\biggl[\frac{2\psi'(x)\psi''(2x)-\psi'(2x)\psi''(x)}{[\psi'(x)-2\psi'(2x)]^2}\biggr]'
=\left[\frac{1}{1-\frac{\psi'(x+\frac{1}{2})}{\psi'(x)}}\right]''
>0
\end{equation*}
for $x\in(0,\infty)$. Accordingly, with the aid of the sharp inequality~\eqref{sectional-doub-ineq}, the third factor in~\eqref{K(x-x)-Eq} is positive and strictly increasing on $(0,\infty)$.
\par
In conclusion, the sectional curvature $\mathcal{K}(x)$ in~\eqref{K(x-x)-Eq} is strictly decreasing on $(0,\infty)$. Combining this with the limits
\begin{equation*}
\lim_{x\to0^+}\mathcal{K}(x)=0 \quad\text{and}\quad \lim_{x\to\infty}\mathcal{K}(x)=-\frac{1}{2}
\end{equation*}
in the proof of~\cite[Theorem~5.1]{RIMA-D-20-01130.tex} demonstrates the sharpness of the inequality~\eqref{sect-curvat-ineq}. The proof of Theorem~\ref{sect-curvat-decr-thm} is thus complete.
\end{proof}

\section{More remarks}
In this section, we give three more remarks on our main result.

\begin{rem}
In~\cite[Remark~6.5]{Alice-y-x-curv-notes.tex}, the author guessed that the function
\begin{equation*}
\frac{2\psi'(x)\psi''(2x)-\psi'(2x)\psi''(x)}{[\psi'(x)-2\psi'(2x)]^2}
\end{equation*}
should be increasing in $x>0$. This implies that the function
\begin{equation*}
\frac{1}{1-\frac{\psi'(x+\frac{1}{2})}{\psi'(x)}}=\frac{\psi'(x)}{\psi'(x)-\psi'(x+\frac{1}{2})}
\end{equation*}
should be convex in $x\in(0,\infty)$. Lemma~\ref{OpenProb-convex} and the proof of Theorem~\ref{sect-curvat-decr-thm} confirm this guess positively.
\end{rem}

\begin{rem}
From the double inequality
\begin{equation}\label{psi(2k)-doub-ineq}
-\frac{1}{2}\frac{(2k)!}{(k-1)!k!}<\frac{\psi^{(2k)}(x)}{[\psi^{(k)}(x)]^2}<0, \quad x\in(0,\infty)
\end{equation}
in~\cite[Theorem~3.2]{RIMA-D-20-01130.tex}, it follows that
\begin{equation}\label{psi(2k=2)-doub-ineq}
-1<\frac{\psi''(x)}{[\psi'(x)]^2}<0, \quad x\in(0,\infty).
\end{equation}
In~\cite[Theorem~11]{Alice-y-x-conj-one.tex}, the double inequalities in~\eqref{psi(2k)-doub-ineq} and~\eqref{psi(2k=2)-doub-ineq} were generalized to a decreasing monotonicity of the function
\begin{equation}\label{Q(m-n)(x)-Q(ijlm)(x)-dfn}
Q_{m,n}(x)=\frac{\psi^{(m+n)}(x)}{\psi^{(m)}(x)\psi^{(n)}(x)}, \quad m,n\ge1
\end{equation}
from $(0,\infty)$ onto $\bigl(-\frac{(m+n-1)!} {(m-1)!(n-1)!},0\bigr)$. See also~\cite[Sections~9 and~12]{manifold-gamma.tex}. In particular, taking $m=n=1$ in~\eqref{Q(m-n)(x)-Q(ijlm)(x)-dfn} deduces that, the function $\frac{\psi''(x)}{[\psi'(x)]^2}$, the second factor in~\eqref{K(x-x)-Eq}, is decreasing from $(0,\infty)$ onto $(-1,0)$.
\end{rem}

\begin{rem}
The sectional curvature $K(x,y)$ can be rearranged as
\begin{equation*}
K(x,y)=\frac{1}{4}\frac{\psi''(x)}{[\psi'(x)]^2}\frac{\psi''(y)}{[\psi'(y)]^2}\frac{\psi''(x+y)}{[\psi'(x+y)]^2} \frac{\frac{\psi'(x)}{\psi''(x)}+\frac{\psi'(y)}{\psi''(y)}-\frac{\psi'(x+y)}{\psi''(x+y)}}
{\bigl[\frac{1}{\psi'(x)}+\frac{1}{\psi'(y)}-\frac{1}{\psi'(x+y)}\bigr]^2}.
\end{equation*}
Theorem~3.1 in~\cite{RIMA-D-20-01130.tex} reads that the function $\frac{\psi''(x)}{[\psi'(x)]^2}$ is decreasing from $(0,\infty)$ onto $(-1,0)$. Accordingly, 
\begin{enumerate}
\item
in order to confirm the decreasing monotonicity in Conjecture~\ref{Alice-lower-bound-conj}, it suffices to show that the function
\begin{align*}
Q(x,y)&=\frac{\psi''(x)}{[\psi'(x)]^2}\frac{\frac{\psi'(x)}{\psi''(x)}+\frac{\psi'(y)}{\psi''(y)}-\frac{\psi'(x+y)}{\psi''(x+y)}}
{\Bigl[\frac{1}{\psi'(x)}+\frac{1}{\psi'(y)}-\frac{1}{\psi'(x+y)}\Bigr]^2}\\
&=\frac{\psi''(x)}{[\psi'(x)]^2}\frac{\frac{\psi'(x+y)}{\psi''(x+y)}}{\Bigl[\frac{1}{\psi'(x+y)}\Bigr]^2}\frac{\frac{\frac{\psi'(x)}{\psi''(x)}+\frac{\psi'(y)}{\psi''(y)}} {\frac{\psi'(x+y)}{\psi''(x+y)}}-1}
{\biggl[\frac{\frac{1}{\psi'(x)}+\frac{1}{\psi'(y)}}{\frac{1}{\psi'(x+y)}}-1\biggr]^2}
\end{align*}
is decreasing in $x\in(0,\infty)$ for fixed $y\in(0,\infty)$;
\item
in order to confirm the lower bound $-\frac{1}{2}$ in Conjecture~\ref{Alice-lower-bound-conj}, it suffices to show that the double inequality $0<Q(x,y)<2$ is valid and sharp on $(0,\infty)$.
\end{enumerate}
\end{rem}

\section{Declarations}

\paragraph{\bf Authors' Contributions}
All authors contributed equally to the manuscript and read and approved the final manuscript. 

\paragraph{\bf Funding}
The author was partially supported by the Natural Science Foundation of Inner Mongolia Autonomous Region (Grant No.~2025QN01041) and by the Youth Project of Hulunbuir City for Basic Research and Applied Basic Research (Grant No.~GH2024020).

\paragraph{\bf Acknowledgements}
The author gratefully acknowledges Yun-Peng Meng, Ph.D. student in the Department of Mathematics at Texas A\&M University, for his helful assistance in the proof of Lemma~\ref{lem:general} of this paper.

\paragraph{\bf Institutional Review Board Statement}
Not applicable. 

\paragraph{\bf Informed Consent Statement}
Not applicable. 

\paragraph{\bf Ethical Approval}
The conducted research is not related to either human or animal use. 

\paragraph{\bf Availability of Data and Material}
Data sharing is not applicable to this article as no new data were created or analyzed in this study. 

\paragraph{\bf Competing Interests}
The authors declare that they have no conflict of competing interests. 

\paragraph{\bf Use of AI Tools Declaration}
The authors declare they have not used Artificial Intelligence (AI) tools in the creation of this article.

\end{document}